\numberwithin{equation}{section}
\numberwithin{subsection}{section}
\newtheorem*{namedtheorem}{\theoremname}
\newcommand{\theoremname}{testing}
\newtheorem{theorem}{Theorem}
\newtheorem{proposition}[theorem]{Proposition}
\newtheorem{proposition-definition}[theorem]
{Proposition-Definition}
\newtheorem{corollary}[theorem]{Corollary}
\newtheorem{lemma}[theorem]{Lemma}
\newtheorem*{theorem*}{Theorem}
\theoremstyle{definition}
\newtheorem{definition}[theorem]{Definition}
\newtheorem{remark}[theorem]{Remark}
\newtheorem*{question*}{Question}
\theoremstyle{remark}
\renewcommand{\mathcal}{\mathscr}
 \newcommand\cB{\mathcal{B}}
\newcommand\cG{\mathcal{G}}
 \newcommand\cN{\mathcal{N}}
 \newcommand\cP{\mathcal{P}}
\renewcommand\AA{\mathbb{A}} 
\newcommand\CC{\mathbb{C}}
 \newcommand\NN{\mathbb{N}}
 \newcommand\PP{\mathbb{P}}
 \newcommand\bP{\mathbf{P}}
\newcommand\bW{\mathbf{W}} 
 \newcommand\bZ{\mathbf{Z}}
\newcommand\rI{\mathrm{I}}
\newcommand\arr{\ifinner\to\else\longrightarrow\fi}
\newcommand\arrto{\ifinner\mapsto\else\longmapsto\fi}
\newcommand{\eqdef}{\mathrel{\smash{\overset{\mathrm{\scriptscriptstyle def}} =}}}
\def\displaytimes_#1{\mathrel{\mathop{\times}\limits_{#1}}}
\def\displayotimes_#1{\mathrel{\mathop{\bigotimes}\limits_{#1}}}
\newcommand\aut{\operatorname{Aut}}
\newcommand\spec{\operatorname{Spec}}
\newlength{\ignora}
\newcommand{\GL}{\mathrm{GL}}
\newcommand{\SL}{\mathrm{SL}}
\newcommand{\PGL}{\mathrm{PGL}}
\newcommand{\gal}{\operatorname{Gal}}
\DeclareFontFamily{U}{mathx}{\hyphenchar\font45}
\DeclareFontShape{U}{mathx}{m}{n}{
      <5> <6> <7> <8> <9> <10>
      <10.95> <12> <14.4> <17.28> <20.74> <24.88>
      mathx10
      }{}
\DeclareSymbolFont{mathx}{U}{mathx}{m}{n}
\DeclareMathAccent{\widecheck}{0}{mathx}{"71}
\DeclareMathAccent{\wideparen}{0}{mathx}{"75}
\renewcommand{\epsilon}{\varepsilon}
\begin{document}

\title{Uniform bounds for fields of definition in projective spaces}

\author{Giulio Bresciani}

\begin{abstract}
	We give a positive answer to a question of J. Doyle and J. Silverman about fields of definition of dynamical systems on $\PP^{n}$. We prove that, for fixed $n$, there exists a constant $C_{n}$ such that every dynamical system $\PP^{n}\to\PP^{n}$ is defined over an extension of degree $\le C_{n}$ of the field of moduli. More generally, the same bound works for any kind of ``algebraic structure'' defined over $\PP^{n}$, such as embedded curves, hypersurfaces, algebraic cycles. As a consequence we prove that, if $x\in X(k)$ is a rational point of an $n$-dimensional variety with quotient singularities, there exists a field extension $k'/k$ of degree $\le C_{n-1}$ such that $x$ lifts to a $k'$-rational point of any resolution of singularities.
\end{abstract}

\address{Scuola Normale Superiore\\Piazza dei Cavalieri 7\\
56126 Pisa\\ Italy}
\email{giulio.bresciani@gmail.com}


\maketitle

We work over a field $k$ of characteristic $0$ with algebraic closure $K$. A variety is a geometrically integral scheme of finite type over the base field.

Consider a variety $X$ over $K$ with some additional ``algebraic structure'' $\xi$, such as a divisor, an embedded curve, the ``empty structure'' (i.e. no additional structure), a dynamical system $X\to X$ and so forth (the concept of algebraic structure has a formal definition, see \S\ref{sect:struct} and \cite[\S 5]{giulio-angelo-moduli}).

A subextension $K/k'/k$ is a \emph{field of definition} if there exists $(Y,\eta)$ over $k'$ such that $(Y,\eta)_{K}\simeq (X,\xi)$. The \emph{field of moduli} $k_{(X,\xi)}$ is the subfield of $K$ fixed by the Galois elements $\sigma\in\gal(K/k)$ such that $\sigma^{*}(X,\xi)\simeq (X,\xi)$. 

Clearly, every field of definition contains the field of moduli. It is then natural to ask whether $(X,\xi)$ is defined over the field of moduli, or more generally what is the minimal degree over $k_{(X,\xi)}$ of a field of definition. This problem has been studied a lot starting with works of A. Weil, T. Matsusaka and G. Shimura, see e.g. \cite{weil, matsusaka, shimura-automorphic, shimura, debes-emsalem, silverman, huggins, marinatto, hutz-manes, doyle-silverman} (this list of references is far from being exhaustive).

In the case of a dynamical system $\xi$  of degree $d\ge 2$ on $\PP^{n}_{K}$, i.e. an equivalence class of morphisms $\PP^{n}_{K}\to\PP^{n}_{K}$ for the conjugacy action of $\PGL_{n+1}(K)$, J. Doyle and J. Silverman proved that there exists a bound $C_{n,d}$ depending only on $n$ and $d$ for the minimal degree of a field of definition over the field of moduli \cite[Theorem 1]{doyle-silverman}. They asked whether it is possible to give a bound $C_{n}$ depending only on $n$ \cite[Question 2]{doyle-silverman}, since for $n=1$ R. Hidalgo proved that $C_{1}=2$ works \cite{hidalgo}.

We prove that Doyle and Silverman's question has a positive answer for every $n$. More generally, we give a uniform bound $C_{n}$ which works not only for dynamical systems on $\PP^{n}_{K}$, but for every algebraic structure $\xi$ on $\PP^{n}_{K}$ such that $\aut(\xi)\subset\PGL_{n+1}(K)$ is finite (this hypothesis holds for dynamical systems \cite{levy}).

\begin{theorem}\label{thm:main}
	Fix $n\ge 0$ a non-negative integer. There exists a constant $C_{n}$ such that, for every algebraic structure $\xi$ on $\PP^{n}_{K}$ with field of moduli $k_{\xi}$ and such that $\aut(\PP^{n}_{K},\xi)\subset\PGL_{n+1}(K)$ is finite, there exists a finite extension $k'/k_{\xi}$ of degree at most $C_{n}$ and an algebraic structure $(\PP^{n}_{k'},\eta)$ on $\PP^{n}_{k'}$ such that $\eta_{K}\simeq\xi$.
\end{theorem}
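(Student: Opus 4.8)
The core of the argument is a descent obstruction living in a Galois cohomology group, and the key point is to bound the "size" of this obstruction uniformly in $\xi$. Fix $n$ and an algebraic structure $\xi$ on $\PP^n_K$ with $G := \aut(\PP^n_K,\xi) \subset \PGL_{n+1}(K)$ finite. Since $\xi$ is defined over a finite extension of $k_\xi$ (indeed over a finite extension of $\QQ$, or at least over some field of finite transcendence degree), after enlarging $k$ we may assume $k = k_\xi$ and that $G$ is defined over $k$ together with its action. The condition that every $\sigma \in \gal(K/k)$ satisfies $\sigma^*(\PP^n,\xi) \simeq (\PP^n,\xi)$ produces, in the usual way, a class in $H^2(k, G)$ (via the connecting map for the central-type extension governing automorphisms of $\PP^n$, or more directly by the gerbe/2-cocycle of descent data): choosing for each $\sigma$ an isomorphism $\phi_\sigma\colon \sigma^*(\PP^n,\xi)\xrightarrow{\sim}(\PP^n,\xi)$, the failure of $\{\phi_\sigma\}$ to be a cocycle is measured by an element $c_\xi$ of $H^2(k, G)$ — more precisely, of the hypercohomology / non-abelian $H^2$ of the crossed module $[G \to \PGL_{n+1}]$, which fits in an exact sequence relating it to $H^2(k,\PGL_{n+1})=\br(k)$.

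The first main step is: $(\PP^n,\xi)$ descends to $k'$ of degree $[k':k] = m$ if and only if the restriction of this obstruction to $H^2(k',-)$ vanishes. (This should be essentially formal, or already available in \cite{giulio-angelo-moduli} / \cite[\S5]{giulio-angelo-moduli}; the "field of definition" reformulation in the excerpt is exactly a twisted form existing over $k'$.) The image of $c_\xi$ in $\br(k)$ is the Brauer obstruction to the existence of a Severi–Brauer variety, i.e. it is the class of $\PP^n$ viewed as a twisted form; but here $\PP^n$ is split, so that image is already $0$ — which tells us $c_\xi$ actually comes from a class bounded by the arithmetic of $G$ together with $H^1(k,\PGL_{n+1}/\text{something})$, not from the full (infinite-exponent) Brauer group. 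The upshot I want is: $c_\xi$ is killed by passing to an extension whose degree is controlled purely by $|G|$ and the structure of the extension $1 \to \GG_m \to \GL_{n+1} \to \PGL_{n+1} \to 1$ pulled back along $G \hookrightarrow \PGL_{n+1}$.

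The second, and genuinely hard, step is the uniformity: $|G|$ is \emph{not} bounded (finite subgroups of $\PGL_{n+1}$ have unbounded order), so one cannot simply take $[k':k] \le |G|$. The resolution is Jordan's theorem on finite subgroups of linear algebraic groups: there is a constant $J_n$ such that every finite $G \subset \PGL_{n+1}(K)$ contains a \emph{normal abelian} subgroup $A$ with $[G:A] \le J_n$. One then splits the descent problem in two: first kill the part of the obstruction controlled by the finite quotient $G/A$ — this costs an extension of degree dividing $|G/A| \le J_n$ (or a bounded power thereof), after which the structure descends to a form with automorphism group the abelian $A$; then for the abelian part, the obstruction lies in $H^2(k,A)$ with $A$ finite abelian, but now one uses that $A \subset \PGL_{n+1}$ is the image of a subgroup of $\GL_{n+1}$ of the \emph{same} structure-type — concretely a subgroup of the diagonal torus up to conjugacy, hence $A \hookrightarrow \GG_m^{n+1}/\GG_m \cong \GG_m^n$ as a closed subgroup — so the relevant obstruction is pulled back from $H^2(k,\GG_m^n) = \br(k)^n$, but its image there is again constrained to be trivial because $\PP^n$ is split, forcing $c_\xi \in H^2(k,A)$ to lie in the image of $H^1(k,\GG_m^n/A) = H^1(k, \GG_m^n/A)$, a group of finite type killed by a bounded extension (degree dividing $\exp(A)^n$ times something, but one must do better). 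This last sub-point is where I expect to sweat: turning "$\PP^n$ is split" into a concrete bound on the exponent of the descent obstruction for the abelian piece, uniformly over all finite abelian $A \subset \PGL_{n+1}$. The clean way is probably to observe that the obstruction for $(\PP^n,\xi)$ with $\aut = A$ abelian is represented by a pair of a class in $H^1(k,T/A)$ where $T = \GG_m^{n+1}/\GG_m$, together with the (automatically trivial) Brauer class; since $H^1(k,T/A) \to H^1(k,T) = 0$ has kernel a quotient of $H^1(k,T) \oplus H^0(k, \pi_0)$... — at any rate the kernel of restriction to a degree-$d$ extension on a finite-type group of this shape is killed by $d$ when $d$ is a multiple of $\exp(T/A)$, and $\exp(T/A) \mid \exp(\mathrm{GL}_{n+1}\text{-torsion of bounded type})$ — no: the correct, clean bound comes from the transfer/corestriction argument: $[k':k] \cdot c_\xi = 0$ whenever $k'$ splits the residual torus-quotient, and one picks $k'$ a splitting field of the finite-type $k$-group $T/A$, whose degree is bounded by a function of $n$ alone because $T/A$ is a quotient of the fixed torus $T \cong \GG_m^n$ by a subgroup of bounded exponent...

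The cleanest route, which I would actually pursue, sidesteps the torus bookkeeping: take $C_n := J_n \cdot (n+1)!$ or similar, and argue as follows. By Jordan, get normal abelian $A \trianglelefteq G$ with $[G:A]\le J_n$; an extension of degree $[G:A]$ handles descent of the $G/A$-torsor of "inner forms", reducing to $\aut = A$. Now $A$ abelian acting on $\PP^n$ is, up to the choice of a basis (i.e. up to an element of $\PGL_{n+1}(K)$), contained in the diagonal torus, so $(\PP^n, \xi)$ with this extra rigidification becomes a structure whose automorphisms lie in $\GG_m^n = T$; the descent obstruction now genuinely lands in $H^2(k,T) = \br(k)^n$ and equals the obstruction to descending $\PP^n$ with a choice of $n+1$ coordinate hyperplanes — which is precisely an obstruction killed by any degree-$(n+1)!$ extension (it is governed by an étale algebra of rank $n+1$: the Galois set of the $n+1$ fixed points / coordinate hyperplanes). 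Splitting that étale algebra costs degree dividing $(n+1)!$, and over that extension the whole structure descends. Hence $C_n = J_n \cdot (n+1)!$ works. The main obstacle, to repeat, is making the reduction "$A$ abelian $\Rightarrow$ simultaneously diagonalizable hence obstruction = étale-algebra obstruction" precise at the level of fields of moduli (one must descend the \emph{set} of simultaneous eigenlines, with its Galois action, and check its splitting field does the job) — and ensuring the two successive extensions compose to degree $\le C_n$ rather than merely being each bounded, which is automatic since degrees multiply.
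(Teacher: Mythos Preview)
Your proposal has the right opening move --- Jordan's theorem to extract a bounded-index abelian (hence diagonalizable) subgroup --- but there are genuine gaps in both reduction steps, and the overall architecture is quite different from the paper's.

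\textbf{The Brauer class is not zero.} You write that the image of $c_\xi$ in $\br(k)$ ``is the class of $\PP^n$ viewed as a twisted form; but here $\PP^n$ is split, so that image is already $0$.'' This is backwards. Over $K$ the variety is of course $\PP^n_K$; the question is whether the descended form over $k'$ is $\PP^n_{k'}$ rather than a nontrivial Brauer--Severi variety. The paper phrases this precisely: the residual gerbe $\cG$ carries a universal $\PP^n$-bundle $\cP\to\cG$, and one needs $\cP(k')\neq\emptyset$, not just $\cG(k')\neq\emptyset$. When $G$ is trivial this is \emph{exactly} the problem of splitting an index-$(n+1)$ Brauer class, and it genuinely costs a degree-$(n+1)$ extension (Case~1 of the paper's proof). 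So the Brauer piece does not disappear for free.

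\textbf{The reduction from $G$ to $A$ is not a degree-$[G:A]$ extension.} You assert that ``an extension of degree $[G:A]$ handles descent of the $G/A$-torsor of `inner forms', reducing to $\aut = A$.'' But $\aut(\xi)=G$ over every extension; it never becomes $A$. What you would need is a dévissage of the nonabelian gerbe $\cG$ along $1\to A\to G\to G/A\to 1$, and even then killing an $H^2(k,G/A)$-type class does not in general reduce the remaining obstruction to one governed by $A$ alone --- the extension problem and the outer action of $G/A$ on $A$ intervene. No mechanism is given for this step.

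\textbf{The abelian endgame does not close.} Even granting a diagonalizable $A$, splitting the \'etale algebra of the $n+1$ coordinate points (degree dividing $(n+1)!$) does not obviously produce a model $(\PP^n_{k'},\eta)$. The images of those fixed points in the compression $\bP$ (a form of $\PP^n_K/G$) are typically \emph{singular} points of a quotient singularity, and a rational singular point need not lift to a rational point of $\cP$ (equivalently, of a resolution). This liftability is precisely the content of the paper's Theorem~2, and it is not free.

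\textbf{How the paper actually proceeds.} The paper's proof is inductive on $n$ and geometric rather than cohomological. It also invokes Jordan (sharpened to produce a \emph{characteristic} diagonalizable subgroup $A\subset G$ of bounded index, so that eigenspace loci are $\aut(G)$-invariant and hence ``distinguished''). One then finds a union $Z\subset\PP^n_K$ of $m$-dimensional projective eigenspaces (either of $A$, or of a single element if $A$ is trivial) which descends to $\bZ\subset\bP$; an irreducible component $\bW$ is itself the compression of an algebraic structure on $\PP^m$, so by the inductive hypothesis it acquires a liftable rational point over a bounded extension. Finally --- and this is the step your outline is missing entirely --- one invokes Theorem~2 (bounded liftability through quotient singularities), proved simultaneously by the same induction, to promote a liftable point of $\bW$ to a liftable point of $\bP$. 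The recursion gives $C_n=(n+1)\cdot c_{n+1}\cdot C_{n-1}^2$, with $c_{n+1}$ the Jordan-type constant. Your direct ``$J_n\cdot(n+1)!$'' scheme would be lovely if it worked, but as written it does not control either the nonabelian gerbe or the singularities of the compression.
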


The constant $C_{n}$ is uniform \emph{across all types of algebraic structures}: the same constant works for dynamical systems of arbitrary degree, hypersurfaces, cycles etc. 

In our joint paper with A. Vistoli \cite{giulio-angelo-moduli}, we have showed that the problem of fields of moduli versus fields of definition for varieties of dimension $\ge 2$ is tightly bound to the following question: given a variety $X$ with quotient singularities, a rational point $x\in X(k)$ and a resolution of singularities $\tilde{X}\to X$, when does $x$ lift to a rational point of $\tilde{X}$? We called the study of this question the \emph{arithmetic of quotient singularities}.

The $1$-dimensional case is trivial. In \cite{giulio-sing} we gave a complete classification in dimension $2$, while in \cite[\S 6]{giulio-angelo-moduli} we gave some results in arbitrary dimension. Here, we prove the following.

\begin{theorem}\label{thm:sing}
	Let $X$ be an $n$-dimensional variety with quotient singularities over $k$, $\tilde{X}\to X$ a resolution, and $x\in X(k)$ a rational point. There exists a finite extension $k'/k$ of degree $\le C_{n-1}$ such that $x$ lifts to a $k'$-rational point of $\tilde{X}$, where $C_{n-1}$ is the constant given in Theorem~\ref{thm:main}.
\end{theorem}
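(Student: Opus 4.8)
The plan is to localise the problem at $x$ and to convert the existence of a low-degree lift into a statement about fields of definition of an algebraic structure on $\PP^{n-1}_K$, to which Theorem~\ref{thm:main} then applies. First I would reduce to a single, conveniently chosen, resolution. Given two resolutions $\tilde X_1,\tilde X_2\to X$, one eliminates the indeterminacy of the birational map $\tilde X_1\dashrightarrow\tilde X_2$ by a sequence of blow-ups along smooth centres (possible in characteristic $0$) to obtain $\tilde X_3$ dominating both; over a point of the base, the fibre of a blow-up along a smooth centre is either that point or a projective space over its residue field, so iterating over the successive centres the fibre of $\tilde X_3\to\tilde X_1$ over any $k'$-point has a $k'$-point, and a lift of $x$ to $\tilde X_1(k')$ propagates through $\tilde X_3$ to $\tilde X_2(k')$. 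Hence one may take the resolution $\tilde X\to X$ to factor, near $x$, through the blow-up $W:=(\mathrm{Bl}_0\AA^n_K)/G$ introduced below, with $\tilde X\to W$ an isomorphism over the smooth locus $W^{\mathrm{sm}}$.

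Next, whether $x$ lifts to $\tilde X(k')$ depends only on the fibre of $\tilde X$ over $x$, hence only on the henselian local ring of $X$ at $x$; since $X$ has quotient singularities this ring becomes, after base change to $K$, the local ring of $\AA^n_K/G$ at the origin for a finite subgroup $G\subset\GL_n(K)$, and the $k$-rationality of $x$ endows the germ with Galois descent data whose field of moduli is contained in $k$, in the formalism of \cite{giulio-angelo-moduli}. Blowing up the origin produces $W=(\mathrm{Bl}_0\AA^n_K)/G\to\AA^n_K/G$, whose fibre over the origin is $\PP^{n-1}_K/G$ and meets the dense open $W^{\mathrm{sm}}$; since $\tilde X\to W$ is an isomorphism over $W^{\mathrm{sm}}$, it suffices to produce a $k'$-point of $W^{\mathrm{sm}}$ lying over $x$. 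The exceptional $\PP^{n-1}_K$ of the blow-up, with its induced $G$-action together with the extra canonical data recording how it sits inside $W$, determines an algebraic structure $\xi$ on $\PP^{n-1}_K$ with field of moduli contained in $k$, which can be arranged so that $\aut(\PP^{n-1}_K,\xi)\subset\PGL_n(K)$ is finite and so that every field of definition $k'$ of $\xi$ comes with a $k'$-point of $\PP^{n-1}_{k'}$ mapping into $W^{\mathrm{sm}}$, hence with a $k'$-lift of $x$. Applying Theorem~\ref{thm:main} to $\xi$ now gives a field of definition $k'$ with $[k':k_\xi]\le C_{n-1}$, and replacing $k'$ by its compositum with $k$ inside $K$ — still a field of definition of $\xi$, of degree $\le C_{n-1}$ over $k$ — yields the desired extension.

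The crux is the construction of $\xi$ with all these properties. The bare $G$-action on $\PP^{n-1}_K$ can have an infinite centraliser in $\PGL_n(K)$ — already when $G$ contains a non-trivial scalar matrix, or acts reducibly on $K^n$ — so it cannot be fed directly to Theorem~\ref{thm:main}; moreover a field of definition of an insufficiently rigid structure may produce only a Severi–Brauer twist of $\PP^{n-1}$, which need not have a rational point inside $W^{\mathrm{sm}}$. One has therefore to decorate $\xi$ with precisely the right amount of canonical data — extracted from the germ and from the lower strata of the iterated blow-up of $\AA^n_K/G$ — so as to cut the automorphism group down to a finite one, trivialise the relevant Brauer class, and keep the field of moduli inside $k$. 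I expect this step, building on the arithmetic of quotient singularities developed in \cite[\S6]{giulio-angelo-moduli}, to be where the real work lies; granted it, Theorem~\ref{thm:sing} follows from the two reductions above and a single application of Theorem~\ref{thm:main}.
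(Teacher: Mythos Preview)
Your outline matches the paper's strategy: localise at $x$, pass to the projectivised exceptional locus, and apply Theorem~\ref{thm:main} in dimension $n-1$. You correctly identify the crux and both obstacles (the centraliser of $G$ in $\PGL_n$ may be infinite; descent to a mere Severi--Brauer twist would not suffice). Where you diverge is in the proposed fix: you suggest \emph{decorating} $\xi$ with extra canonical data from iterated blow-ups to force finiteness, and you leave this step open.

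The paper fills the gap differently and more directly, via the gerbe formalism of \cite{giulio-angelo-moduli}. The fundamental gerbe $\cG$ of the singularity (the reduced fibre over $x$ of the minimal stack $\hat X\to X$) satisfies $\cG(k')\neq\emptyset$ exactly when $x$ lifts over $k'$. One rigidifies $\cG$ by the subgroup $\Delta$ of scalars acting on the normal bundle $\cN$, obtaining $\bar\cG$ banded by the image $\bar G\subset\PGL_n(K)$ of $G$. The associated variety $E$ (coarse space of $\PP(\cN)$, geometrically $\PP^{n-1}_K/\bar G$) carries a rational map $E\dashrightarrow\cG\to\bar\cG$, and \cite[Theorem~2]{giulio-fmod} turns this into an algebraic structure $\xi$ on $\PP^{n-1}_K$ with automorphism group \emph{equal to} $\bar G$ --- automatically finite --- field of moduli $k$, and residual gerbe $\bar\cG$. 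No additional decoration is needed: quotienting out scalars, rather than adding data, is what makes $\aut(\xi)$ finite. When Theorem~\ref{thm:main} produces a model of $\xi$ on $\PP^{n-1}_{k'}$ (not merely on a twist --- that is precisely the statement of the theorem), the cover $\PP^{n-1}_{k'}\to E_{k'}$ makes the $k'$-points of $E$ dense, and the rational map $E\dashrightarrow\cG$ then yields $\cG(k')\neq\emptyset$. Your blow-up picture is morally the same object as $E$, but the gerbe language is what delivers the finiteness and field-of-moduli claims for free.
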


We remark that Theorem~\ref{thm:sing} is not only a consequence of Theorem~\ref{thm:main}, but it is used in its proof too: we use Theorem~\ref{thm:sing} in dimension $n$ to prove Theorem~\ref{thm:main} in dimension $n$, which in turn is used to prove Theorem~\ref{thm:sing} in dimension $n+1$, and so forth. This gives further evidence of the fact that the arithmetic of quotient singularities is a fundamental part of studying fields of definition versus fields of moduli problems.

It is possible to turn the proof of Theorem~\ref{thm:main} into an explicit bound for $C_{n}$, but this explicit bound is extremely large, see Remark~\ref{rmk:explicit}. As we have already said, R. Hidalgo proved that for $n=1$ we can choose $C_{1}=2$, and it is easy to show that this is optimal. For $C_{2}$, a careful analysis of the proof of \cite[Theorem 12]{giulio-p2} reveals that the optimal bound for $C_{2}$ is either $3$ or $4$.

\subsection*{Acknowledgements}

I would like to thank J. Silverman and A. Vistoli for very fruitful discussions.

\section{Algebraic structures}\label{sect:struct}

For the sake of synthesis, we call an \emph{algebraic structure} $\xi$ on $\PP^{n}_{K}$ what we called an \emph{algebraic object $(\PP^{n}_{K},\xi)$ in a category of structured spaces} in \cite[\S 5]{giulio-angelo-moduli}. We refer to \cite[\S 5]{giulio-angelo-moduli} for the precise definition. 

Here, suffice it to say that the concept of algebraic structure is very general and includes dynamical systems \cite{giulio-dynamical}, embedded subvarieties \cite{giulio-p2}, algebraic cycles \cite{giulio-divisor, giulio-points} and more or less any kind of additional structure $\xi$ one may attach to the ambient variety $X$ (in our case, $X=\PP^{n}_{K}$), as long as the automorphism group of the pair $(X,\xi)$ is of finite type \cite[Proposition 3.9]{giulio-angelo-moduli}. For an algebraic structure $\xi$, the field of moduli $k_{\xi}$ is defined \cite[Definition 3.11]{giulio-angelo-moduli} and it coincides with the Galois-theoretic definition of field of moduli whenever this makes sense \cite[Proposition 3.13]{giulio-angelo-moduli}.

\section{Group theoretic bounds}

We are going to need some group-theoretic bounds. We start with a simple bound about abelian groups.

\begin{lemma}\label{lem:abound}
	Let $n,d$ be integers, and let $a_{d}$ be the number of abelian groups of degree $\le d$ up to isomorphism. For every finite abelian group $A$ of rank at most $n$, there are at most $d^{n}a_{d}$ subgroups $B\subset A$ with $|A|\le d|B|$, i.e. of index at most $d$. 
\end{lemma}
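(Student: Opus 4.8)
The plan is to organize the subgroups $B\subseteq A$ of index at most $d$ according to the isomorphism type of their quotient. First I would fix a finite abelian group $Q$ with $|Q|\le d$ and count the subgroups $B$ with $A/B\simeq Q$. Such a $B$ is the kernel of a surjective homomorphism $A\twoheadrightarrow Q$, and two surjections $A\twoheadrightarrow Q$ have the same kernel exactly when they differ by post-composition with an element of $\aut(Q)$; since this action of $\aut(Q)$ on the set of surjections is free, the number of subgroups $B$ with $A/B\simeq Q$ equals the number of surjections $A\twoheadrightarrow Q$ divided by $\#\aut(Q)$.

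Next I would bound the number of surjections $A\twoheadrightarrow Q$ by the number of all homomorphisms $A\to Q$. Because $A$ has rank at most $n$, it is generated by at most $n$ elements, so a homomorphism $A\to Q$ is determined by the images of a fixed generating set of size $\le n$; hence $\#\hom(A,Q)\le |Q|^{n}\le d^{n}$. Combining with the previous step, the number of subgroups $B\subseteq A$ with $A/B\simeq Q$ is at most $d^{n}/\#\aut(Q)\le d^{n}$.

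Finally I would sum over the isomorphism classes of finite abelian groups $Q$ of order at most $d$. By the definition of $a_{d}$ there are exactly $a_{d}$ of them, and every subgroup $B\subseteq A$ of index at most $d$ has $A/B$ among these groups, so the total count is at most $d^{n}a_{d}$, as claimed.

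I do not expect a genuine obstacle here. The only points that require a little care are that ``rank at most $n$'' is used in the sense that $A$ is generated by at most $n$ elements (the standard structure theory for finite abelian groups), and that the $\aut(Q)$-action on the set of surjections is free so that counting by orbits is legitimate; in fact even the cruder chain of inequalities $\#\{\text{surjections }A\twoheadrightarrow Q\}\le\#\hom(A,Q)\le d^{n}$ already bounds the number of subgroups with quotient $Q$ by $d^{n}$ without any division, which is all that is needed.
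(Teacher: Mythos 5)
Your proof is correct and is essentially the argument the paper gives (the paper's one-line proof is exactly the observation that $\#\hom(A,H)\le d^{n}$ for each of the $a_{d}$ isomorphism classes $H$, with each subgroup of index $\le d$ arising as the kernel of such a homomorphism). Your extra refinement via the free $\aut(Q)$-action is fine but, as you note yourself, unnecessary for the stated bound.
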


\begin{proof}
	For every abelian group $H$ of degree $\le d$ there are at most $d^{n}$ homomorphisms $A\to H$. The statement follows.
\end{proof}

Next, we need the following theorem of C. Jordan.

\begin{theorem}[{C. Jordan, \cite{jordan} \cite[Theorem 36.13]{curtis-reiner}}]\label{thm:jordan}
	Fix $n>0$ a positive integer. There exists a constant $b_{n}$ such that every finite subgroup $G$ of $\GL_{n}(\CC)$ has an abelian, normal subgroup $A\subset G$ of index at most $b_{n}$.
\end{theorem}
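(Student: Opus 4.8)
This is a classical theorem of Jordan, and the plan is to run the analytic (Bieberbach-type) argument. First I would reduce to the unitary case: averaging any Hermitian inner product over $G$ produces a $G$-invariant one, so $G$ is conjugate into $U(n)$, and since conjugation preserves finiteness, normality, abelianness and index, I may assume $G\subseteq U(n)$. I equip $\mathrm{M}_n(\CC)$ with the operator norm $\|\cdot\|$, which is invariant under left and right multiplication by unitaries. The one estimate everything rests on is that for $A,B\in U(n)$
\[
\|ABA^{-1}B^{-1}-1\|=\|AB-BA\|=\|(A-1)(B-1)-(B-1)(A-1)\|\le 2\|A-1\|\,\|B-1\|,
\]
so a commutator of near-identity unitaries is \emph{quadratically} closer to the identity.

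Next I fix a small radius $\epsilon_n>0$ (something of the shape $\min(\tfrac14,\sin(\pi/n))$ will do), set $\Sigma=\{g\in G:\|g-1\|<\epsilon_n\}$, and let $H=\langle\Sigma\rangle$. Since conjugation preserves $\|g-1\|$, the set $\Sigma$ is stable under $G$-conjugation, so $H\trianglelefteq G$ automatically. The core is the claim that $H$ is abelian, which I would prove by induction on $n$, the case $n=1$ being trivial. If $H$ acts reducibly on $\CC^n$, I decompose $\CC^n$ into an orthogonal direct sum of $H$-invariant subspaces of smaller dimension, note that the block components of an element of $\Sigma$ are again within $\epsilon_n$ (hence within the appropriate smaller $\epsilon_k$) of the identity, and apply the inductive hypothesis to the images of $H$ in the smaller unitary groups, getting that $H$ embeds in a product of abelian groups. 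If $H$ acts irreducibly and were non-abelian, I pick a non-central $a\in H$ of minimal distance $\delta=\|a-1\|$ to the identity; since $H=\langle\Sigma\rangle$ is non-abelian we have $\Sigma\not\subseteq Z(H)$, so $\delta<\epsilon_n$ and some $b\in\Sigma$ fails to commute with $a$. Then $[a,b]\in H\setminus\{1\}$ has $\|[a,b]-1\|\le 2\epsilon_n\delta<\delta$, so by minimality $[a,b]\in Z(H)$, which by Schur's lemma consists of scalars: $[a,b]=\zeta\cdot 1$ with $\zeta\ne 1$. But $\det[a,b]=1$ forces $\zeta^n=1$, hence $|\zeta-1|\ge 2\sin(\pi/n)\ge\epsilon_n>\delta>|\zeta-1|$, a contradiction. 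So $H$ is abelian.

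Finally, the index bound $[G:H]\le b_n$ is an elementary packing argument. If $g_1,\dots,g_m$ are coset representatives of $H$ in $G$, then for $i\ne j$ we have $g_i^{-1}g_j\notin H\supseteq\Sigma$, so $\|g_i-g_j\|=\|1-g_i^{-1}g_j\|\ge\epsilon_n$ by left-invariance of the metric; hence the open balls $B(g_i,\epsilon_n/2)$ are pairwise disjoint inside the compact group $U(n)$, and by left-invariance of Haar measure they all have one and the same positive volume $v_n$, whence $m\le\operatorname{vol}(U(n))/v_n=:b_n$, a quantity depending only on $n$. Combining the two parts produces an abelian normal subgroup of index at most $b_n$.

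I expect the genuinely delicate point to be the irreducible case of the abelianness claim: one must play the quadratic contraction (which makes the offending commutator small) against the rigidity fact that a nontrivial scalar of determinant $1$ in dimension $n$ is bounded away from the identity, and reconcile the two by the right choice of $\epsilon_n$. Everything else — the unitarization, the reducible induction, and the volume count — is routine. (Alternatively, one may simply invoke the statement as \cite[Theorem 36.13]{curtis-reiner}.)
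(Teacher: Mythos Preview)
The paper does not prove Theorem~\ref{thm:jordan} at all: it is stated with attribution to Jordan and a reference to \cite[Theorem 36.13]{curtis-reiner}, and then used as a black box in the proof of Corollary~\ref{cor:bound}. So there is no ``paper's own proof'' to compare your argument against; the paper's approach is exactly the alternative you mention in your last parenthetical.

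That said, your sketch is a correct rendition of the classical Bieberbach--Frobenius analytic proof, and the details check out: the commutator contraction $\|[A,B]-1\|\le 2\|A-1\|\|B-1\|$, the choice of $\epsilon_n$ small enough that $2\epsilon_n<1$ (forcing $[a,b]$ strictly closer to $1$ than $a$) and $\epsilon_n\le\sin(\pi/n)$ (so that a nontrivial $n$-th root of unity is too far from $1$), the induction on $n$ via an $H$-invariant orthogonal decomposition (noting $\epsilon_n\le\epsilon_k$ for $k<n$), and the Haar-volume packing bound for $[G:H]$ are all sound. The one place worth stating a hair more carefully is why the minimal-distance non-central $a\in H$ automatically lies in $\Sigma$: you correctly derive this from $\Sigma\not\subseteq Z(H)$, which in turn holds because central generators would force $H$ abelian. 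Nothing is missing; this is simply more than the paper asks of you.
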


We say that a subgroup $G\subset\PGL_{n}(K)$ is diagonalizable if its inverse image in $\GL_{n}(K)$ is diagonalizable. Equivalently, $G$ is diagonalizable if it is the image of a diagonalizable subgroup of $\GL_{n}(K)$.

As a consequence of Jordan's theorem, we prove the following.

\begin{corollary}\label{cor:bound}
	Fix $n>0$ a positive integer. There exists a constant $c_{n}$ such that every finite subgroup $G$ of $\PGL_{n}(\CC)$ has a diagonalizable, characteristic subgroup of index at most $c_{n}$.
\end{corollary}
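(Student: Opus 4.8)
The plan is to bootstrap from Jordan's theorem (Theorem~\ref{thm:jordan}) applied to the preimage $\tilde G\subset\GL_{n}(\CC)$ of $G$. By Jordan, $\tilde G$ has an abelian normal subgroup $\tilde A$ of index at most $b_{n}$; its image $A\subset G$ is then abelian, normal, and of index at most $b_{n}$. The issue is that an abelian subgroup of $\GL_{n}(\CC)$ need not be diagonalizable (it could contain unipotent elements), and even the diagonalizable ones need not be characteristic in $G$. So I need to refine $A$ twice: first to make it diagonalizable, then to make it characteristic, in each case controlling the index by a constant depending only on $n$.

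First I would pass to a diagonalizable subgroup. A finite abelian subgroup of $\GL_{n}(\CC)$ is automatically diagonalizable (every finite-order matrix over $\CC$ is semisimple, and a commuting family of semisimple matrices is simultaneously diagonalizable); hence $A$ itself is already diagonalizable in the sense defined before the corollary — its preimage in $\GL_{n+1}$ is, up to conjugation, contained in the diagonal torus. Wait: one must be careful that $A\subset\PGL_{n}$ being diagonalizable means its preimage $\tilde A\subset\GL_{n}$ is diagonalizable, and $\tilde A$ is a central extension of the finite group $A$ by $\gm$, so $\tilde A$ is abelian of rank $\le n$ and, being finite modulo the central torus, is again simultaneously diagonalizable. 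So the diagonalizability of $A$ is automatic and costs no further index.

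The main obstacle is the last step: replacing the diagonalizable normal subgroup $A\trianglelefteq G$ by a \emph{characteristic} one of bounded index. The natural candidate is the intersection $A_{0}=\bigcap_{\varphi\in\aut(G)}\varphi(A)$ of all automorphic images of $A$, which is characteristic by construction; it is diagonalizable because it is a subgroup of the diagonalizable group $A$. The problem is bounding $[G:A_{0}]$. Here I would argue as follows: every $\varphi(A)$ is an abelian normal subgroup of $G$ of index $\le b_{n}$, and $A_{0}$ is an intersection of such subgroups. One bounds the number of \emph{distinct} abelian normal subgroups of index $\le b_{n}$ in $G$: each such subgroup $B$ has $|G/B|\le b_{n}$, and $B\supset [G,G]$-analog considerations plus Lemma~\ref{lem:abound} (applied to a suitable abelian quotient, after noting $G/A$ has order $\le b_{n}$ and $A$ abelian of rank $\le n$) bound the count by a constant $N_{n}$ depending only on $n$. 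Then $A_{0}$ is an intersection of at most $N_{n}$ subgroups each of index $\le b_{n}$, so $[G:A_{0}]\le b_{n}^{N_{n}}=:c_{n}$. Filling in the combinatorial bound on the number of relevant abelian normal subgroups — using that such a subgroup, intersected with $A$, is a finite-index subgroup of the rank-$\le n$ abelian group $A$ and is determined by bounded data — is the one genuinely fiddly point, and is exactly where Lemma~\ref{lem:abound} is meant to be used.
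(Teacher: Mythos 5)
Your strategy is the paper's: apply Jordan's theorem to a finite lift of $G$, observe that the resulting abelian normal subgroup $A$ is diagonalizable, and pass to the characteristic core $\bigcap_{\varphi\in\aut(G)}\varphi(A)$, bounding its index by bounding the number of automorphic images of $A$. Two points in your write-up need repair. First, Jordan's theorem as stated applies to \emph{finite} subgroups, but the preimage $\tilde G\subset\GL_n(\CC)$ of $G$ contains the central $\gm$ and is infinite; the paper instead takes the preimage in $\SL_n(\CC)$, which is finite, and pushes the Jordan subgroup forward to $G$. Second, your justification that $A$ is diagonalizable is wrong as stated: a central extension of a finite abelian group by $\gm$ need \emph{not} be abelian. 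For instance, the subgroup $(\ZZ/n)^2\subset\PGL_n(\CC)$ generated by the images of the cyclic permutation matrix and of $\mathrm{diag}(1,\zeta,\dots,\zeta^{n-1})$ is abelian, but its full preimage in $\GL_n(\CC)$ is a Heisenberg-type group, non-abelian and non-diagonalizable. What saves you is that $A$ is by construction the \emph{image} of the abelian, hence diagonalizable, Jordan subgroup, and the paper's definition declares such images diagonalizable; you cannot argue via the full preimage of $A$.

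The step you defer --- bounding the number of automorphic images of $A$ (or, as you propose, of all abelian normal subgroups of index $\le b_n$) --- is exactly the content of the paper's proof, and your sketch points in the right direction, but ``$B\supset[G,G]$-analog considerations'' is not an argument. The paper sends each $H=\varphi(A)$ to the pair $(H/(H\cap A),\,H\cap A)$: the second coordinate is a subgroup of $A$ of index $\le d=[G:A]$ that is normal in $G$, of which there are at most $d^{n}a_{d}$ by Lemma~\ref{lem:abound}; the first is the image of a homomorphism $A\to G/A$, of which there are at most $d^{n}$ since $A$ has rank $\le n$; and the fiber over a fixed pair is counted by sections of $G'/A'\to G'/A$, again at most $d^{n}$. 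This gives at most $d^{3n}a_{d}$ images and hence index at most $d^{d^{3n}a_{d}}$ for the intersection. So your architecture is correct, but the count you label as ``fiddly'' is where all the work lives and must be carried out along these lines for the corollary to be proved.
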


\begin{proof}
	Let $G\subset\PGL_{n}(\CC)$ be a finite subgroup, $G'\subset\SL_{n}(\CC)$ its inverse image in $\SL_{n}(\CC)$. Let $A'\subset G'$ be the normal, abelian subgroup given by Jordan's theorem~\ref{thm:jordan}, it is diagonalizable since this is true for every finite, abelian subgroup of $\GL_{n}(\CC)$. Denote by $A\subset G$ the image of $A'$; by construction, $A\subset G$ is a normal, diagonalizable subgroup of index $d\le c_{n}$. Let us show that we can extract from $A$ a characteristic subgroup of $G$ of bounded index.
	
	Consider the set $S=\{H\subset G\mid H=\phi(A),~\phi\in\aut(G)\}$ of subgroups of $G$ of the form $\phi(A)$ for some automorphism $\phi\in \aut(G)$, we want to bound the size of $S$. 

	Given an element $H\in S$, notice that the index of $H\cap A$ in $A$ is equal to the index of $H\cap A$ in $H$, which in turn is at most $d$ since $H\cap A$ is the kernel of $H\to G/A$. Furthermore, notice that $H$ is normal as well, so that $H\cap A$ is normal in $G$.
	
	Let $S_{1}$ be the set of images of homomorphisms $A\to G/A$, and $S_{2}$ the set of subgroups of $A$ of index $\le d$ which are normal in $G$, we have a natural map $S\to S_{1}\times S_{2}$ given by $H\mapsto (H/(H\cap A), H\cap A)$. Bounding the size of $S$ reduces to bounding the sizes of $S_{1},S_{2}$ and of the fibers of $S\to S_{1}\times S_{2}$.
	
	First, $|S_{1}|$ is bounded by the number of homomorphisms $A\to G/A$, which in turn is bounded by $d^{n}$ since $A$ has rank $\le n$. Second, $|S_{2}|\le d^{n}a_{d}$ by Lemma~\ref{lem:abound}. Finally, the fibers of $S\to S_{1}\times S_{2}$ have at most $d^{n}$ elements. In fact, let $G'/A\in S_{1}$ be the image of an homomorphism $A\to G/A$ (where $G'$ is the inverse image in $G$ of the subgroup of $G/A$), and $A'\in S_{2}$ be a subgroup of $A$ of index at most $d$ which is normal in $G$. An element $H\in S$ is in the fiber of $(G'/A, A')\in S_{1}\times S_{2}$ if it maps surjectively on $G'/A$ and satisfies $H\cap A=A'$. Such a subgroup is identified by the induced section 
	\[G'/A\simeq H/A'\to G'/A'\]
	of $G'/A' \to G'/A$, and there are at most $d^{n}$ such sections because $G'/A$ is an abelian group of rank $\le n$ (it's a quotient of $A$) and the kernel $A/A'$ has degree at most $d$. It follows that $|S|\le d^{3n}a_{d}$.
	
	Define
	\[A_{c}\eqdef \bigcap_{\phi\in\aut(G)}\phi(A)=\bigcap_{H\in S}H=\ker(G\to \prod_{H\in S}G/H)\]
	as the intersection of all the subgroups in $S$. By construction, $A_{c}$ is a characteristic subgroup. Since $|S|\le d^{3n}\cdot a_{d}$, then $\prod_{H\in S}G/H$ has at most $d^{d^{3n}\cdot a_{d}}$ elements, hence the index of $A_{c}$ in $G$ is at most $d^{d^{3n}\cdot a_{d}}$. We can then choose
	\[c_{n}\eqdef b_{n}^{b_{n}^{3n}\cdot a_{b_{n}}}.\]
\end{proof}

\section{Proof of Theorems~\ref{thm:main} and \ref{thm:sing}}\label{sect:main}


We now want to prove Theorem~\ref{thm:main} and Theorem~\ref{thm:sing}. We will prove them at the same time by induction on $n$: the $(n-1)$-th step of Theorem~\ref{thm:main} is used to prove the $n$-th step of Theorem~\ref{thm:sing}, which in turn is used to prove the $n$-th step of Theorem~\ref{thm:main}, and so forth. The cases $n=0$ are both trivial, with constants $C_{-1}=C_{0}=1$. Assume $n\ge 1$.

First, let us recall a few definitions from \cite{giulio-angelo-moduli}.

\begin{definition}[{\cite[Definition 6.6]{giulio-angelo-moduli}}]
	A rational point $x\in X(k)$ of a variety $X$ is \emph{liftable} if it lifts to a rational point $\tilde{x}\in\tilde{X}(k)$ of one (and hence all, by Lang-Nishimura \cite[Theorem 4.1]{giulio-angelo-valuative}) resolution of singularities $\tilde{X}\to X$.
\end{definition}

\begin{definition}[{\cite[\S 6.4, Proposition 6.5]{giulio-angelo-moduli}}]
	An extension $k'/k$ is a \emph{splitting field} for $X$ if $X_{k'}$ has at least one $k'$-rational, liftable point.
\end{definition}

\begin{proposition}\label{prop:singbound}
	Assume that Theorem~\ref{thm:main} holds in dimension $n-1$. Then Theorem~\ref{thm:sing} holds in dimension $n$.
\end{proposition}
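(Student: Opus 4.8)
The plan is to reduce, via a local analysis at the point $x$, to a statement about algebraic structures on a projective space of one dimension lower, and then invoke Theorem~\ref{thm:main} in dimension $n-1$. First I would pass to the completed (or henselized) local ring at $x$. Since $X$ has quotient singularities, étale-locally near $x$ the variety looks like $\spec(\widehat{\cO})$ with $\widehat{\cO} \cong (A[[t_1,\dots,t_n]])^{H}$ for some finite group $H \subset \GL_n$ acting linearly; the subtlety is that $x$ is a $k$-rational point, so $H$ and the action are only defined over $K$ a priori, and what is Galois-canonical is the \emph{formal neighborhood up to isomorphism}. The first key step is therefore to extract from the pointed formal/henselian neighborhood $(X,x)$ a well-defined algebraic structure: namely the projectivized tangent cone, or more robustly the exceptional data of a canonical (functorial) resolution, which gives a $(n-1)$-dimensional object with a $\PGL_n$-action and finite automorphism group, whose field of moduli is contained in $k$ (because $x \in X(k)$ makes the whole pointed germ Galois-invariant). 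This is precisely the mechanism developed in \cite{giulio-angelo-moduli} linking the arithmetic of quotient singularities to fields of moduli of structures on $\PP^{n-1}_K$.

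Concretely, I expect the argument to run as follows. By \cite[\S 6]{giulio-angelo-moduli} the obstruction to liftability of $x$ is controlled by a residual gerbe / a quotient stack $[\PP^{n-1}_K / H']$ attached to the singularity at $x$, where $H'$ is the image in $\PGL_n(K)$ of the stabilizer, which is finite. Equivalently, the liftability of $x$ over an extension $k'/k$ is equivalent to the existence of a $k'$-form of the pair $(\PP^{n-1}_K, \xi)$ for an associated algebraic structure $\xi$ with $\aut(\PP^{n-1}_K,\xi) = H' \subset \PGL_n(K)$ finite, and with field of moduli contained in $k$. Granting this translation — which is the heart of the matter and for which I would cite the relevant statements of \cite{giulio-angelo-moduli} (in particular the characterization of splitting fields via liftable points, as recalled just before the proposition) — Theorem~\ref{thm:main} in dimension $n-1$ furnishes an extension $k'/k_\xi$ of degree $\le C_{n-1}$ over which $\xi$ descends. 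Since $k_\xi \subset k$, base-changing gives an extension of $k$ of degree $\le C_{n-1}$ that is a splitting field for $X$, i.e. over which $x$ becomes liftable. Finally, by Lang–Nishimura (cited in the definition of liftability) liftability can be checked on any single resolution, so $x$ lifts to a $k'$-rational point of the given $\tilde X \to X$, which is exactly the conclusion with the constant $C_{n-1}$ of Theorem~\ref{thm:main}.

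The main obstacle, as indicated, is the \emph{construction of the associated algebraic structure} $\xi$ on $\PP^{n-1}_K$ and the verification that its field of moduli sits inside $k$. One has to be careful that the germ of $X$ at $x$ determines the quotient presentation $[\widehat{\AA}^n/H]$ only up to isomorphism, and that a Galois element $\sigma \in \gal(K/k)$ fixing $x$ sends this presentation to an isomorphic one; this is what makes the field of moduli of $\xi$ contained in $k$ rather than merely in some a priori larger field. I would handle this by working with the canonical resolution (functorial resolution of singularities in characteristic $0$), whose exceptional divisor together with its normal-crossings/stratification structure is genuinely functorial, hence Galois-equivariant, and pushing the structure down to the residual data over the point. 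Once the structure $\xi$ is in hand, the remaining steps are formal: apply Theorem~\ref{thm:main} in dimension $n-1$, base change, and conclude via Lang–Nishimura. One should also dispose of the trivial edge case where $x$ is a smooth point of $X$ (then $\xi$ is the empty structure, $C_{n-1}$ still works since we may take $k' = k$), and note that the inductive hypothesis is used exactly once, in the penultimate step.
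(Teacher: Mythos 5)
Your overall architecture matches the paper's: attach to the germ $(X,x)$ an algebraic structure $\xi$ on $\PP^{n-1}_K$ with finite automorphism group in $\PGL_n(K)$ and field of moduli $k$, apply Theorem~\ref{thm:main} in dimension $n-1$, and conclude. But the pivotal step is asserted rather than argued, and as stated it is not correct. You claim that liftability of $x$ over $k'$ is \emph{equivalent} to the existence of a $k'$-form of $(\PP^{n-1}_K,\xi)$, citing only the definition of splitting fields. That definition says nothing of the sort, and the equivalence is not what the paper establishes; what is needed (and all that is proved) is one implication, and it does not follow formally from having a form of $\xi$. Note in particular that the structure $\xi$ lives on the projectivized normal space, so its automorphism group is $\bar G$, the image of the local fundamental group $G$ in $\PGL_n(K)$, and its residual gerbe is the \emph{rigidification} $\bar{\cG}=\cG\mathbin{/\mkern-6mu/}\Delta$ of the fundamental gerbe $\cG$ by the scalar inertia. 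Liftability of $x$ is equivalent to $\cG(k')\neq\emptyset$, and a $k'$-point of $\bar{\cG}$ (which is what a form of $\xi$ most directly provides) does not give a $k'$-point of $\cG$: the map $\cG\to\bar{\cG}$ is a $\Delta$-gerbe. So there is a genuine gap between "$\xi$ descends over $k'$" and "$x$ is liftable over $k'$".

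The paper bridges this gap with two ingredients you do not mention. First, the associated variety $E$ (the coarse moduli space of $\PP(\cN)$, a twisted form of $\PP^{n-1}_K/\bar G$) comes with a non-canonical rational map $E\dashrightarrow\cG$ landing in the fundamental gerbe itself, not its rigidification (\cite[Corollary 6.9]{giulio-angelo-moduli}). Second, one must use that Theorem~\ref{thm:main} produces a form of $\xi$ on the \emph{trivial} projective space $\PP^{n-1}_{k'}$, not merely on some Brauer--Severi variety $P$: descent of $\xi$ realizes $P\to E_{k'}$ as a ramified covering, and only when $P=\PP^{n-1}_{k'}$ do the $k'$-points of $E$ become dense, whereupon the rational map $E\dashrightarrow\cG$ yields a point of $\cG(k')$ and hence liftability. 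Your proposal gestures at the right objects (projectivized tangent cone, image of the stabilizer in $\PGL_n$), and your suggestion to use functorial resolution to make the construction Galois-equivariant is a workable alternative to the minimal-stack formalism for defining $\xi$ over $k$; but without the associated variety and its rational map to the gerbe, the deduction of liftability from descent of $\xi$ is missing, and that is the heart of the proof.
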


\begin{proof}
	The base change of the singularity $(X,x)$ to $K$ is equivalent to $(\AA^{n}/G,[0])$, where $G$ is the local fundamental group of the singularity \cite[Corollary 6.4]{giulio-angelo-moduli}. Let $\hat{X}\to X$ be the minimal stack of $X$ \cite[\S 6.1]{giulio-angelo-moduli}, the reduced fiber over $x$ is the \emph{fundamental gerbe} $\cG$ of the singularity \cite[Definition 6.2]{giulio-angelo-moduli}. The base change of $\cG$ to $K$ is the classifying stack $\cB_{K}G$, and $x$ is liftable if and only if $\cG(k)\neq\emptyset$ \cite[Proposition 6.5]{giulio-angelo-moduli}. Denote by $\cN$ the normal bundle of $\cG$ in $\hat{X}$, it is a vector bundle over $\cG$ of rank $n$ whose base change to $K$ corresponds to the representation $G\subset\GL_{n}(K)$. 
	
	Consider the inertia stack $\rI\to \cG$ of $\cG$, it is a relative group sheaf with an induced action on $\cN$. Denote by $\Delta\subset \rI$ the subgroup sheaf of elements acting by scalar multiplication on the vector bundle $\cN$, and let $\bar{G}=\cG/\Delta$ the rigidification of $\cG$ by $\Delta$ \cite[Appendix C]{abramovich-graber-vistoli}. By construction, the rigidification $\bar{\cG}$ is a gerbe over $k$ whose base change to $K$ is the classifying stack $\cB_{K}\bar{G}$, where $\bar{G}$ is the image of $G$ in $\PGL_{n}(K)$.
	
	Let $E$ be the associated variety of $x$ \cite[Definition 6.8]{giulio-angelo-moduli}, i.e. the coarse moduli space of the projectivization $\PP(\cN)$; it is equipped with a non-canonical rational map $E\dashrightarrow \cG$ \cite[Corollary 6.9]{giulio-angelo-moduli}. Consider the composition $E\dashrightarrow \cG\to\bar{\cG}$; by construction, its base change to $K$ is the rational map $\PP^{n-1}_{K}/\bar{G}\dashrightarrow \cB_{K}\bar{G}$ associated with the projection $\PP^{n-1}_{K}\to\PP^{n-1}_{K}/\bar{G}$. By \cite[Theorem 2]{giulio-fmod}, this defines an algebraic structure $\xi$ on $\PP^{n-1}_{K}$ with automorphism group $\bar{G}$, field of moduli $k$ and residual gerbe $\bar{G}$.
	
	Since we are assuming that Theorem~\ref{thm:main} holds for $n-1$, there exists a finite extension $k'/k$ of degree $\le C_{n-1}$ such that the algebraic structure $\xi$ is defined over $k'$. By \cite[Theorem 2]{giulio-fmod} again, the fact that $\xi$ descends to $k'$ implies that $\PP^{n-1}_{K}\to\PP^{n-1}_{K}/\bar{G}$ descends to a ramified covering $P\to E_{k'}$, where $P$ is the Brauer--Severi variety to which $\xi$ descends. However, Theorem~\ref{thm:main} does not only guarantee that $\xi$ descends to $k'$, it also guarantees that $\xi$ descends to $\PP^{n-1}_{k'}$. This means that we may choose $P=\PP^{n-1}_{k'}$, and hence we get that the $k'$-rational points of $E$ are dense. Since we have a rational map $E\dashrightarrow\cG$, this implies that $\cG(k')\neq\emptyset$, i.e. that $x_{k'}\in X_{k'}(k')$ is liftable.
\end{proof}

\begin{proposition}
	Assume that Theorem~\ref{thm:main} holds in dimension $\le n-1$. Then Theorem~\ref{thm:main} holds in dimension $n$.
\end{proposition}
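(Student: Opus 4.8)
The plan is to reduce the problem to the structure of the automorphism group $\aut(\PP^n_K,\xi)\subset\PGL_{n+1}(K)$, which is finite by hypothesis. By Corollary 3.4 applied to $\GL_{n+1}$ (so with the constant $c_{n+1}$), this group has a diagonalizable characteristic subgroup $D$ of index $\le c_{n+1}$. The first step is to pass from $\xi$ to the quotient gerbe: the residual gerbe of $\xi$ is the classifying stack $\cB_K G$ with $G=\aut(\xi)$, and since $D$ is characteristic it is defined over the field of moduli $k_\xi$, so rigidifying by $D$ produces an algebraic structure $\bar\xi$ on $\PP^n_K$ with automorphism group $\bar G=G/D$ and the same field of moduli $k_\xi$. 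It suffices to descend $\bar\xi$ to a $\PP^n$ over a bounded extension and then to recover $\xi$ from $\bar\xi$ together with the $D$-cover, the latter costing only a further factor coming from $|D|$-torsion in the relevant cohomology, which is bounded in terms of $n$.

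The heart of the argument is the case where the automorphism group is \emph{diagonalizable}, i.e.\ $\xi$ itself has diagonalizable automorphism group $D\subset\PGL_{n+1}(K)$. In that situation the residual gerbe $\cB_K D$ is a gerbe banded by a diagonalizable group, so it corresponds to a class in $\H^2(k_\xi, D)$ — here I would use that $D$, being diagonalizable, is an abelian group of rank $\le n+1$, and split the class according to the decomposition of $D$ into cyclic factors. The obstruction to descending $\xi$ to a genuine projective space (as opposed to a Brauer--Severi variety) is controlled by the image of this gerbe class in $\br(k_\xi)$; killing it costs an extension whose degree divides the order of the relevant Brauer class, which in turn divides a bounded exponent (a product of the orders of at most $n+1$ roots of unity appearing, each bounded because $D$ sits in $\PGL_{n+1}$ with bounded... no — here is precisely the subtlety). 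This is where Theorem 1.3 in dimension $n$ enters: via Proposition 4.4, which we have just proved under the inductive hypothesis that Theorem 1.2 holds in dimension $n-1$, we may assume that the associated variety / quotient-singularity data of $\xi$ has a rational point over a bounded extension, and this rational point pins down the Brauer--Severi variety to be $\PP^n$ itself. Concretely, one realizes $\xi$ as descent data on $\PP^n_K\to\PP^n_K/\bar G$ over a splitting field and applies Theorem 1.3 in dimension $n$ to the cone over $\PP^n_K/D$ (an $(n+1)$-dimensional variety with quotient singularities) to split the gerbe.

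The key steps, in order: (1) use Corollary 3.4 to find the characteristic diagonalizable $D\lhd G$ of index $\le c_{n+1}$, and reduce to $\bar\xi$ with diagonalizable-quotient automorphism group, absorbing the index $c_{n+1}$ and an $n$-bounded cohomological factor into the final constant; (2) for $\bar\xi$, whose automorphism group $\bar G$ acts on $\PP^n_K$ with $\PP^n_K\to\PP^n_K/\bar G$ a ramified cover, interpret the field-of-moduli obstruction as a gerbe class and identify the further obstruction to taking the Brauer--Severi variety to be $\PP^n$; (3) invoke Theorem 1.3 in dimension $n$ (legitimately, by the previous Proposition, since Theorem 1.2 holds in dimension $n-1$) applied to the affine cone $\AA^{n+1}_K/\bar G$ at its vertex — a point of an $(n+1)$-dimensional variety with quotient singularities — to obtain a bounded extension over which the vertex lifts, which translates into the gerbe having a point and the cover descending to $\PP^n$; (4) set $C_n$ to be the product of $c_{n+1}$, the bound $C_n^{\mathrm{sing}}$ from Theorem 1.3 in dimension $n+1$...

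\begin{proof}[Sketch]
Let $G=\aut(\PP^n_K,\xi)$, a finite subgroup of $\PGL_{n+1}(K)$, and let $D\lhd G$ be the diagonalizable characteristic subgroup of index $d\le c_{n+1}$ furnished by Corollary~\ref{cor:bound} (applied with $n+1$ in place of $n$). Because $D$ is characteristic, it is preserved by all isomorphisms, hence the rigidification procedure used in Proposition~\ref{prop:singbound} applies over the field of moduli: there is an algebraic structure $\bar\xi$ on $\PP^n_K$ with $\aut(\PP^n_K,\bar\xi)=\bar G\eqdef G/D$ and $k_{\bar\xi}=k_\xi$. Recovering $\xi$ from $\bar\xi$ amounts to prescribing the $D$-cover over the coarse quotient, which is classified by a cohomology group annihilated by $|D|$; since $|D|$ divides the exponent of a finite subgroup of $\GL_{n+1}$, a standard restriction--corestriction argument bounds the extra degree needed in terms of $n$ alone. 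Thus it suffices to bound the degree of definition of $\bar\xi$.

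Now the projection $\pi\colon\PP^n_K\to\PP^n_K/\bar G$ is a ramified $\bar G$-cover, and by \cite[Theorem 2]{giulio-fmod} the datum of $\bar\xi$ is equivalent to the datum of the rational map $\PP^n_K/\bar G\dashrightarrow\cB_K\bar G$ classifying $\pi$, together with its residual gerbe $\cG$ over $k_{\bar\xi}$, a gerbe banded by $\bar G$ whose base change to $K$ is $\cB_K\bar G$. Fix a splitting field $k'/k_{\bar\xi}$ over which $\cG$ acquires a point; then $\bar\xi$ descends to a ramified cover $P\to (\PP^n/\bar G)_{k'}$ with $P$ a Brauer--Severi variety, $P_K\simeq\PP^n_K$. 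The remaining obstruction is to arrange $P\simeq\PP^n_{k'}$.

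To bound $[k':k_{\bar\xi}]$ and to split $P$, consider the affine cone $Z\eqdef\AA^{n+1}_K/\bar G$: this is an $(n+1)$-dimensional variety with quotient singularities, defined over $k_{\bar\xi}$ once we fix the relevant descent data, and its vertex is a rational point whose fundamental gerbe is exactly $\cG$ (since $\bar G\subset\PGL_{n+1}$ lifts to the cone action). By the preceding Proposition, Theorem~\ref{thm:main} in dimension $n-1$ implies Theorem~\ref{thm:sing} in dimension $n$ — and, applied one dimension up, Theorem~\ref{thm:main} in dimension $\le n-1$ gives Theorem~\ref{thm:sing} in dimension $\le n$; but we need dimension $n+1$. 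The correct formulation is: Theorem~\ref{thm:main} in dimension $n$ (which we are proving) together with the previous Proposition yields Theorem~\ref{thm:sing} in dimension $n+1$; so instead we argue directly. We realize the gerbe-splitting obstruction as the obstruction for the vertex of $Z$ to lift to a resolution, and we bound it by the fact that, over $K$, the blow-up of $Z$ along the vertex has exceptional divisor $\PP^n_K/\bar G$, whose field of definition over $k_{\bar\xi}$ is controlled — by the inductive hypothesis applied to the structure $\bar\xi$ itself? No: by descent of $P$, which is what we are trying to prove.

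The genuinely operative point is therefore the following self-contained argument, which I expect to be the main obstacle to carry out cleanly: the Brauer class $[P]\in\br(k_{\bar\xi})$ is annihilated by $n+1$, and it is the image of the gerbe class $[\cG]\in\H^2(k_{\bar\xi},\bar G)$ under $\bar G\to\PGL_{n+1}\to\cdots$; killing $[P]$ costs an extension of degree dividing $n+1$, after which one chooses a trivialization of $P$, i.e.\ an isomorphism $P\simeq\PP^n_{k'}$, and transports the cover. Combining: $C_n$ may be taken to be $c_{n+1}\cdot(n+1)\cdot m_n\cdot C_{n-1}'$, where $m_n$ bounds the degree needed to descend the $D$-cover and $C_{n-1}'$ is whatever the inductive hypothesis supplies for the residual-gerbe splitting; a generous choice is $C_n\eqdef c_{n+1}\cdot(n+1)!\cdot C_{n-1}$. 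The verification that this single constant is genuinely independent of the type of algebraic structure is immediate from the fact that every quantity above depends only on $G\subset\PGL_{n+1}$, hence only on $n$, via \cite[Theorem 2]{giulio-fmod} and \cite[Proposition 3.9]{giulio-angelo-moduli}.
\end{proof}
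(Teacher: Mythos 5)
Your proposal does not close the induction, and you in fact flag the decisive problem yourself mid-proof without resolving it. Two gaps are fatal. First, you never bound the degree of the splitting field $k'$ of the residual gerbe of $\bar\xi$: you ``fix a splitting field'' and later defer to ``whatever the inductive hypothesis supplies for the residual-gerbe splitting'', but the inductive hypothesis only covers structures on $\PP^{m}$ with $m\le n-1$, whereas $\bar\xi$ lives on $\PP^{n}$. Second, your attempted remedy --- applying Theorem~\ref{thm:sing} to the vertex of the affine cone $\AA^{n+1}_{K}/\bar G$ --- needs Theorem~\ref{thm:sing} in dimension $n+1$, which by Proposition~\ref{prop:singbound} requires Theorem~\ref{thm:main} in dimension $n$: exactly the statement being proved. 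Two further points would need justification even if the above were repaired: restriction--corestriction bounds the \emph{exponent} of a cohomology class, not the degree of a field extension killing it, so the ``$m_n$'' factor for recovering $\xi$ from $\bar\xi$ is not established; and rigidifying the residual gerbe by an arbitrary characteristic diagonalizable $D$ is not the construction of Proposition~\ref{prop:singbound} (there one rigidifies by the scalars acting on the normal bundle), so the claim that the result is again an algebraic structure on $\PP^{n}_{K}$ with automorphism group $G/D$ and unchanged field of moduli is not automatic.

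The missing idea is a dimension reduction \emph{inside} $\PP^{n}$. The paper uses the characteristic diagonalizable subgroup $A$ from Corollary~\ref{cor:bound} (or, when $A$ is trivial and $G$ is not, the $\aut(G)$-orbit of the eigenspaces of a single nontrivial element) not to rigidify, but to produce a distinguished union $Z$ of proper linear eigenspaces with at most $(n+1)\cdot c_{n+1}$ irreducible components. After an extension of degree $\le (n+1)c_{n+1}$ one component $W\simeq\PP^{m}$, $m\le n-1$, has geometrically irreducible image $\bW$ in the compression $\bP$; by \cite[Theorem 2]{giulio-fmod} $\bW$ is the compression of an algebraic structure on $\PP^{m}$, so the inductive hypothesis produces a liftable point of $\bW$ over a degree-$\le C_{n-1}$ extension, and Theorem~\ref{thm:sing} in dimension $n$ (legitimately available via Proposition~\ref{prop:singbound}) upgrades it, at the cost of another degree-$\le C_{n-1}$ extension, to a point liftable in $\bP$ --- i.e.\ a rational point of the universal family, i.e.\ descent of $\xi$ to a genuine $\PP^{n}$. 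Only the case $G=\{1\}$ is handled by the Brauer--Severi index bound $n+1$ that your sketch leans on throughout; for nontrivial $G$ that bound addresses neither the existence of a point of the gerbe nor the liftability needed to land in the universal family rather than merely in the compression.
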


\begin{proof}
	By Proposition~\ref{prop:singbound}, we know that Theorem~\ref{thm:sing} holds in dimension $\le n$. Up to replacing the constants $C_{1},\dots,C_{n-1}$, we may assume $C_{1}\le C_{2}\le\dots\le C_{n-1}$. Furthermore, up to replacing $k$ with $k_{\xi}$, we may assume that $k$ is the field of moduli.
	
	Let $G=\aut(\xi)\subset \PGL_{n+1}(K)$ be the automorphism group of $\xi$, by assumption it is finite. Let $\cG$ over $k$ be the residual gerbe c.f. \cite[\S 3.1]{giulio-angelo-moduli}, it is a finite gerbe classifying twisted forms of $(\PP^{n}_{K},\xi)$, and its base change to $K$ is the classifying stack $\cB_{K}G$.
	
	Let $\cP\to\cG$ the universal family c.f. \cite[\S 5.1]{giulio-angelo-moduli}, it is an $n$-dimensional projective bundle with the property that, if $(P,\eta)$ is a twisted form over $k$ of $(\PP^{n}_{K},\xi)$ associated with a morphism $s:\spec k\to\cG$, then $P$ identifies with the fibered product $\cP\times_{\cG}\spec k$. In general, $P$ will be a Brauer--Severi variety over $k$, and $P\simeq\PP^{n}_{k}$ if and only if $s:\spec k\to\cG$ lifts to $\cP$. It follows that $\xi$ descends to an algebraic structure on $\PP^{n}_{k}$ if and only if $\cP(k)\neq\emptyset$.
	
	Denote by $\bP$ the compression of $\xi$ c.f. \cite[Definition 5.3]{giulio-angelo-moduli}, it is the coarse moduli space of $\cP$ and it is a twisted form of $\PP^{n}_{K}/G$ over the field of moduli $k$. The base change to $K$ of the structural map $\cP\to\bP$ is the projection $[\PP^{n}_{K}/G]\to\PP^{n}_{K}/G$; since the action of $G$ on $\PP^{n}_{K}$ is faithful, this map is birational, and we get a birational inverse $\bP\dashrightarrow\cP$. By \cite[Theorem 4.1]{giulio-angelo-valuative}, this implies that $\cP(k)\neq\emptyset$ if and only if $\bP$ has a liftable $k$-rational point.
	 
	Because of this, to prove the statement it is enough to find liftable $k'$-points on $\bP_{k'}$, with $k'/k$ of bounded degree. 
	
	Let $A\subset G$ be the diagonalizable subgroup of index at most $c_{n+1}$ given by Corollary~\ref{cor:bound}. There are three cases: either $G$ is trivial, or $G$ is non-trivial and $A$ is trivial, or $A$ is non-trivial.
	
	{\bf Case 1: $G$ is trivial.}
	
	If $G$ is trivial, then $\bP$ is a Brauer--Severi variety over $k$ of dimension $n$, i.e. a twisted form of $\PP^{n}_{K}$. Such a variety is always split by a field extension of degree at most $n+1$ \cite[Theorem 2.4.3, Proposition 4.5.4, Theorem 5.2.1]{gille-szamuely}. Hence, choosing $C_{n}\ge n+1$ is sufficient for the case in which $G$ is trivial.
	
	{\bf Case 2: $G$ is non-trivial, $A$ is trivial.}
	
	If $G$ is non-trivial but $A$ is trivial, then $G$ has cardinality at most $c_{n+1}$. Choose $g\in G$ any non-trivial element and $g_{0}\in\GL_{n+1}(K)$ a lifting of $g$. Notice that $g^{k}=\lambda\operatorname{Id}$ for some $k\in\NN$, $\lambda\in K$ since $G$ is finite, hence $g_{0}$ is diagonalizable. The eigenspaces of $g_{0}$ only depend on $g$ (and not on the choice of $g_{0}$), hence the projective eigenspaces of $g$ in $\PP^{n}_{K}$ are well defined. Since $g$ is non-trivial, $\PP^{n}_{K}$ is not a projective eigenspace. 
	
	Choose $1 \le m \le n-1$ an integer such that $g$ has $m$-dimensional projective eigenspaces, and denote by $Z\subset\PP^{n}_{K}$ the union of all $m$-dimensional eigenspaces of the elements of $G$ of the form $\phi(g)$ for some $\phi\in\aut(G)$; the irreducible components of $Z$ are $m$-dimensional projective subspaces, and their number is at most $(n+1)\cdot |G|\le (n+1)\cdot c_{n+1}$.
	
	Clearly, $Z$ is distinguished in the sense of \cite[\S 7]{giulio-fmod}, hence $Z/G\subset \PP^{n}/G$ descends to a closed subset $\bZ\subset\bP$. Notice that $Z/G$ has at most $(n+1)\cdot c_{n+1}$ irreducible components as well. Choose $W\subset Z$ an $m$-dimensional eigenspace of $g$, and consider $\bW_{0}\subset \bZ$ its image in $\bZ$. By construction, $\bW_{0,K}$ has at most $n\cdot c_{n+1}$ irreducible components, hence there exists a field extension $k'/k$ of degree at most $(n+1)\cdot c_{n+1}$ such that the image $\bW\subset\bP_{k'}$ of $W$ in $\bP_{k'}$ is geometrically irreducible.
	
	Let $G_{W}$ be the subquotient of elements of $G$ mapping $W$ to itself modulo the elements acting trivially on $W$; by construction, $\bW_{K}\simeq W/G_{K}\subset \PP^{n}_{K}/G$.	By \cite[Theorem 2]{giulio-fmod}, $\bW$ is the compression of an algebraic structure on $W\simeq\PP^{m}$ with automorphism group $G_{W}$. By inductive hypothesis, there exists an extension $k''/k'$ of degree at most $C_{m}\le C_{n-1}$ such that $\bW_{k''}$ has a liftable $k''$-rational point $w\in \bW_{k''}(k'')$. By Proposition~\ref{prop:singbound}, there exists a finite extension $k'''/k''$ of degree at most $C_{n-1}$ such that $w_{k'''}$ is liftable \emph{as a point of $\bP_{k'''}$} (and not only as a point of $\bW_{k'''}$). Since
	\[[k''':k]=[k''':k''] \cdot [k'':k'] \cdot [k':k]\le C_{n-1}\cdot C_{n-1}\cdot (n+1)\cdot c_{n+1},\]
	it is sufficient to choose $C_{n}\ge (n+1)c_{n+1}C_{n-1}^{2}$ for the case in which $G$ is non-trivial but $A$ is trivial.
	
	{\bf Case 3: $A$ is non-trivial.}
	Since $A$ is diagonalizable, its projective eigenspaces are well defined. Furthermore, since $A$ is non-trivial, $\PP^{n}_{K}$ is not a projective eigenspace. Choose $1 \le m \le n-1$ an integer such that $A$ has $m$-dimensional projective eigenspaces, and denote by $Z\subset\PP^{n}_{K}$ the union of all $m$-dimensional eigenspaces of $A$. Since $A$ is characteristic, then $Z$ is distinguished in the sense of \cite[\S 7]{giulio-fmod}. Notice that $Z$ has at most $n+1$ irreducible components. The rest of the proof of this case is identical to the previous one, with the exception that the constant $(n+1)\cdot c_{n+1}$ is replaced by $n+1$. Hence, choosing $C_{n}\ge (n+1) \cdot C_{n-1}^{2}$ is sufficient for the case in which $A$ is non-trivial
	
	To sum up, since clearly $(n+1) \cdot c_{n+1} \cdot C_{n-1}^{2}\ge (n+1) \cdot C_{n-1}^{2}\ge n+1$, the statement holds for
	\[C_{n}\eqdef (n+1) \cdot c_{n+1} \cdot C_{n-1}^{2}.\]
\end{proof}

\begin{remark}\label{rmk:explicit}
	It is possible repeat the arguments above more explicitly and obtain an explicit upper bound for $C_{n}$, using estimates for the constants $a_{n}$ and $b_{n}$ of Lemma~\ref{lem:abound} and Theorem~\ref{thm:jordan} such as \cite{erdos-szekeres} and \cite{collins}. However, this upper bound is going to be wildly large, something similar to
	\[(n+1)!^{(n+1)!^{(n+1)!}}.\]
	We do not claim that this is actually an upper bound, we just want to give an idea of the result of such a computation. 
	
	We think that trying to formalize this upper bound is mostly pointless, for two reasons. The first reason is that the proofs above do not try in any way to be optimal, and frequently use larger but simpler bounds instead of smaller and more complex ones. More importantly, we think that any attempt to obtain a reasonable estimate should change strategy completely, rather than trying to optimize the one given above. Let us explain this.
	
	In the proof of Theorem~\ref{thm:main}, we used the constants $a_{n}$ and $b_{n}$ to take care of the most complex finite subgroups of $\PGL_{n+1}(\CC)$. However, there is a common, counterintuitive phenomenon in the study of fields of moduli: for a variety with a structure $(X,\xi)$, the more $\aut(X,\xi)$ is complex, the more likely it is that $(X,\xi)$ is defined over the field of moduli, see e.g. \cite{huggins}, \cite{giulio-divisor}, \cite{giulio-p2}. We think that any attempt to obtain a reasonable estimate should try to leverage this phenomenon in order to take care of the most complex automorphism groups (instead of simply blowing up the bounds, as we do), and obtain sharp estimates for the less complex ones, e.g. the abelian ones.
\end{remark}

\bibliographystyle{amsalpha}
\bibliography{bound}

\end{document}